\newcommand{\ca}{{\mathcal{C} }}
\newcommand{\D}{{\mathcal{D} }}
\newcommand{\C}{{\mathcal{C} }}
\newcommand{\id}{\mbox{\rm id\,}}
\newcommand{\Aut}{\mbox{\rm Aut\,}}
\newcommand\Hom{\operatorname{Hom}}
\theoremstyle{plain}
\numberwithin{equation}{section}
\newtheorem{theorem}{Theorem}[section]
\newtheorem{proposition}[theorem]{Proposition}
\theoremstyle{definition}
\theoremstyle{remark}
\newtheorem{remark}[theorem]{Remark}
\def\pf{\begin{proof}}
\def\epf{\end{proof}}
\theoremstyle{remark}
\begin{document}

\title{On braided and ribbon unitary fusion categories}
\author[C\'esar Galindo]{C\'esar Galindo}
\thanks{ This work  was partially supported by Vicerrector\'ia de Investigaciones de la Universidad de los Andes}
\address{ Departamento de Matem\'aticas, Universidad de los Andes,
\newline \indent Bogot\'a, Colombia}
\email{cn.galindo1116@uniandes.edu.co, cesarneyit@gmail.com}
\begin{abstract}
We prove that every braiding on a unitary fusion category is automatically unitary, and that every unitary braided fusion category admits a unique unitary ribbon structure.
\end{abstract}

\subjclass[2010]{20F36, 16W30, 18D10}

\date{\today}
\maketitle

\section{Introduction}

A unitary braided fusion category (UBFC) is a braided fusion category (BFC) over the complex numbers, where the $\Hom$-spaces have a Hilbert structure compatible with the tensor product and the braiding (see Subsection \ref{subsection unitary fusion}). Unitarity is the key additional structure for some applications in mathematics and theoretical physics. For example, in mathematics, UBFCs give rise to unitary representations of the Artin braid group and finite-depth subfactors of the hyperfinite II$_1$ von Neumann factor \cite{Wenzl}. For physics, a unitary structure is important in order to construct \emph{unitary} (2+1)-dimensional TQFTs \cite{TurBook}\cite{Witten} and algebraic models for exotic 2-dimensional physical (anyonic) systems \cite{QHall}\cite{MR}. These last two applications make unitary modular categories the mathematical foundation for topological quantum computation \cite{Freedman}\cite{Kitael}\cite{WangBook}. The above applications have renewed interest in the construction and study of UBFC properties.

Fusion categories are ``quantum analogues'' of finite groups, mainly because the prototypical example of a fusion category is Rep${\mathbb C}(G)$, the category of finite-dimensional complex representations of a finite group $G$. This analogy also holds because in fusion categories, there are phenomena such as nilpotency \cite{GeNik}, solvability \cite{ENO2}, and simplicity \cite{ENO3}. However, important differences exist between general fusion categories and Rep${\mathbb C}(G)$, one of which is unitarity. The fusion category Rep$_{\mathbb C}(G)$ admits a canonical unitary structure, but, for example, the Yang-Lee category does not \cite{ENO}. In fact, there are families of premodular categories that do not admit unitary structures at all \cite{Rowell}. On the other hand, there are at least two infinite families of unitary premodular categories. One is associated with quantum groups \cite{Wenzl}, and the other is related to finite groups \cite{ENO2}. The fusion categories of the latter family always admit a unique unitary structure \cite{GHR}.

In this note, we are interested in the following natural questions: Does a BFC admit a unitary structure if the underlying fusion category is unitary? If it does, how many unitary ribbon structures does it admit? We prove that every braiding over a unitary fusion category is automatically unitary (see Theorem \ref{braids are unitary})\footnote{This answers Problem 3.3 in http://aimpl.org/fusioncat/, posted by Zhenghan Wang.}, and every UBFC admits a unique unitary ribbon structure (see Theorem \ref{unique unitary ribbon}).

\medbreak
\textbf{Acknowledgements:} The author would like to thank Paul Bressler, Eric Rowell, and Zhenghan Wang for their useful comments. The author is also grateful to Tian Lan for identifying an error in Proposition 3.1 of the previous version.

\section{Preliminaries}

In this note, we will use the basic theory of fusion categories and braided fusion categories. For further details on these topics, we refer the reader to \cite{ENO}. In this section, we recall some definitions and results on unitary fusion categories. Much of the material presented here can be found in \cite{TurBook}.

\subsection{Unitary fusion categories}\label{subsection unitary fusion}

A \textbf{$C^*$-category} $\D$ is a $\mathbb{C}$-linear abelian category with an involutive antilinear contravariant endofunctor $*$ that is the identity on objects. The hom-spaces $\Hom_\D(X, Y)$ are Hilbert spaces, and the norms satisfy 
\[
||fg|| \leq ||f||\ ||g||, \quad ||f^*f|| = ||f||^2,
\]
for all $f \in \Hom_\D(X, Y)$, $g \in \Hom_\D(Y, Z)$, where $f^*$ denotes the image of $f$ under $*$.

Let $X$ and $Y$ be objects in a $C^*$-category. A morphism $u: X \to Y$ is \textbf{unitary} if $uu^* = \id_Y$ and $u^*u = \id_X$. A morphism $a: X \to X$ is \textbf{self-adjoint} if $a^* = a$.

\begin{remark}\label{remark polar}
Every isomorphism in a $C^*$-category has a polar decomposition, \emph{i.e.}, if $f: X \to Y$ is an isomorphism, then $f = ua$, where $a: X \to X$ is self-adjoint and $u: X \to Y$ is unitary. See \cite[Proposition 8]{Baez}.
\end{remark}

A \textbf{unitary fusion category} is a fusion category $\C$, where $\C$ is a $C^*$-category, the constraints are unitary, and $(f \otimes g)^* = f^* \otimes g^*$ for every pair of morphisms $f, g$ in $\C$.

\begin{remark}{~}
\begin{enumerate}
    \item A unitary fusion category is a fusion category with additional structure. Hence, a fusion category could have more than one unitary structure. All examples known to the author admit a unique unitary structure. Moreover, in \cite[Theorem 5.20]{GHR}, it was proved that every weakly group-theoretical fusion category admits a unique unitary structure.
    
    \item If $\C$ is a unitary fusion category, we can find bases such that the $F$-matrices $(F^{ijk}_l)_{n,m} = F^{i,j,k}_{l;n,m}$ are unitary, where $\{F^{i,j,k}_{l;n,m}\}$ are the $6j$-symbols (see \cite{WangBook} or \cite{TurBook} for the definition of $6j$-symbols). Conversely, if for a fusion category $\C$ it is possible to find bases such that the $F$-matrices $(F^{ijk}_l)_{n,m} = F^{i,j,k}_{l;n,m}$ are unitary, then $\C$ is a unitary fusion category. See \cite[Section 4]{Yama}.
\end{enumerate}
\end{remark}

\section{Braiding and modular structures over unitary fusion categories are unitary}

\subsection{The center of a unitary fusion category}

We shall recall the definition of the \textbf{center} $\mathcal{Z}(\ca)$ of a monoidal category $\ca$, see \cite[Chapter XIII]{Kas}. The objects of $\mathcal{Z}(\ca)$ are pairs $(Y, c_{-,Y})$, where $Y \in \ca$ and $c_{X,Y} : X \otimes Y \to Y \otimes X$ are isomorphisms natural in $X$ satisfying 
\[
c_{X \otimes Y, Z} = (c_{X,Z} \otimes \id_Y)(\id_X \otimes c_{Y,Z}), \quad c_{I,Y} = \id_Y,
\]
for all $X, Y, Z \in \ca$. A morphism $f: (X, c_{-,X}) \to (Y, c_{-,Y})$ is a morphism $f: X \to Y$ in $\ca$ such that 
\[
(f \otimes \id_W)c_{W,X} = c_{W,Y}(\id_W \otimes f),
\]
for all $W \in \ca$.

The center is a braided monoidal category with structure given as follows:
\begin{itemize}
  \item The tensor product is $(Y, c_{-,Y}) \otimes (Z, c_{-,Z}) = (Y \otimes Z, c_{-, Y \otimes Z})$, where 
  \[
  c_{X, Y \otimes Z} = (\id_Y \otimes c_{X,Z})(c_{X,Y} \otimes \id_Z): X \otimes Y \otimes Z \to Y \otimes Z \otimes X,
  \]
  for all $X \in \ca$.
  \item The identity element is $(I, c_{-,I})$, where $c_{Z,I} = \id_Z$.
  \item The braiding is given by the morphism $c_{X,Y}$.
\end{itemize}

If $\C$ is a unitary fusion category, the \textbf{unitary center} $\mathcal{Z}^*(\C)$ is defined as the full tensor subcategory of $\mathcal{Z}(\C)$, where $(X, c_{-,X}) \in \mathcal{Z}^*(\C)$ if and only if $c_{W,X}: W \otimes X \to X \otimes W$ is unitary for all $W \in \C$.

\begin{remark}

In the published version of this work, it was stated that the inclusion functor $\mathcal{Z}^*(\C) \subseteq \mathcal{Z}(\C)$, being an equivalence as established in \cite[Proposition 5.24]{GHR} and \cite[Theorem 6.4]{Mueg2}, implied that $\mathcal{Z}^*(\C) = \mathcal{Z}(\C)$. However, thanks to an observation by Tian Lan, we now understand that, in general, $\mathcal{Z}^*(\C) \neq \mathcal{Z}(\C)$; in other words, $\mathcal{Z}^*(\C)$ is not a replete subcategory of $\mathcal{Z}(\C)$.

To illustrate this distinction, consider the following case, as pointed out by Tian Lan: $\operatorname{URep}(G)$, the category of unitary representations of a finite group $G$, and $\operatorname{Rep}_{\mathbb{C}}(G)$, the category of all complex representations of $G$. Although these two categories are equivalent as fusion categories, they are not identical in terms of their objects. Indeed, Tian Lan provided a counterexample using the category $\operatorname{Hilb}_G$ of finite-dimensional Hilbert spaces graded over $G$, where we have $\operatorname{URep}(G) \subsetneq \operatorname{Rep}(G) \subset \mathcal{Z}(\operatorname{Hilb}_G)$ and $\operatorname{URep}(G) \subset \mathcal{Z}^*(\operatorname{Hilb}_G)$, yet the inclusion $\operatorname{URep}(G) \subsetneq \operatorname{Rep}(G)$ is not a replete subcategory.

This nuance arises because every complex representation is unitarizable (i.e., isomorphic to a unitary representation). 
\end{remark}

Let $\C$ be a unitary fusion category equipped with a braiding $c_{X,Y}: X \otimes Y \to Y \otimes X$. We say that $c$ is a \textbf{unitary braiding} if, for every pair of objects $X, Y \in \C$, the morphism $c_{X,Y}$ is unitary.

\begin{theorem}\label{braids are unitary}
Every braiding of a unitary fusion category is unitary
\end{theorem}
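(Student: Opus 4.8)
The plan is to reduce the statement to Proposition \ref{centro igual a centro unitario} via the standard fact that a braiding on $\C$ is the same data as a braided tensor functor $\C \to \mathcal Z(\C)$ that is a section of the forgetful functor. Concretely, if $c$ is a braiding on the unitary fusion category $\C$, then for each object $X \in \C$ the pair $(X, c_{-,X})$ is a half-braiding, and the assignment $X \mapsto (X, c_{-,X})$ defines a (braided) tensor functor $\iota \colon \C \to \mathcal Z(\C)$; the axioms for $(X, c_{-,X})$ to be an object of the center are exactly the hexagon and naturality axioms for the braiding $c$, and the fact that morphisms of $\C$ become morphisms in $\mathcal Z(\C)$ is again naturality of $c$. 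One should check (quick) that each $c_{X,Y}$ is automatically an isomorphism: the inverse braiding $c'_{X,Y} := c_{Y,X}^{-1}$ is provided by the braided structure, so every component of $c$ is invertible, and hence $(X, c_{-,X})$ is genuinely an object of $\mathcal Z(\C)$ and not merely a lax version.

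Next I would invoke Proposition \ref{centro igual a centro unitario}: since $\C$ is unitary, $\mathcal Z^*(\C) = \mathcal Z(\C)$. Therefore the object $(X, c_{-,X})$, being in $\mathcal Z(\C)$, in fact lies in $\mathcal Z^*(\C)$, which by definition means that $c_{W,X} \colon W \otimes X \to X \otimes W$ is unitary for every $W \in \C$. Taking $W = Y$ for an arbitrary object $Y$ gives that $c_{Y,X}$ is unitary for all $X, Y \in \C$, which is precisely the assertion that the braiding $c$ is unitary. So the theorem follows immediately once the identification of braidings with objects of the center is in place.

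The main obstacle — though it is more a bookkeeping point than a genuine difficulty — is verifying carefully that the half-braiding $(X, c_{-,X})$ coming from a braiding really satisfies the center axioms as stated in the preliminaries (in particular $c_{I,X} = \id_X$ and the multiplicativity $c_{W \otimes W', X} = (c_{W,X} \otimes \id_{W'})(\id_W \otimes c_{W',X})$), which amounts to rewriting one of the two hexagon identities for $c$, plus the unit normalization. Once that translation is made, no further computation is needed; the unitarity is extracted from Proposition \ref{centro igual a centro unitario} for free. An alternative, if one prefers to avoid the center entirely, is to argue directly: by Remark \ref{remark polar}, factor $c_{X,Y} = u_{X,Y} a_{X,Y}$ with $u_{X,Y}$ unitary and $a_{X,Y}$ self-adjoint positive, show using naturality and the hexagons that the $a_{X,Y}$ assemble into a monoidal natural automorphism of the identity functor whose components are positive self-adjoint, and then appeal to the fact that such an automorphism must be trivial in a fusion category (the "gauge" is rigid); but routing through $\mathcal Z^*(\C) = \mathcal Z(\C)$ is cleaner and uses only results already available in the excerpt.
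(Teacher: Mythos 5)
Your argument is correct and is essentially the paper's own proof: both route the braiding through the embedding $X \mapsto (X, c_{-,X})$ into the center and then invoke Proposition \ref{centro igual a centro unitario} ($\mathcal{Z}^*(\C)=\mathcal{Z}(\C)$) to conclude every component of $c$ is unitary. The extra verifications you flag (the hexagon giving the half-braiding axioms, the unit normalization) are exactly the ``easy to see'' step the paper leaves implicit.
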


\begin{proof}
Let $\C$ be a unitary fusion category, and let $c$ be a braiding on $\C$. The braiding $c$ induces a braided tensor inclusion 
\[
\iota_c: \C \hookrightarrow \mathcal{Z}(\C), \quad X \mapsto (X, c_{-,X}).
\]
By \cite[Proposition 5.24]{GHR} and \cite[Theorem 6.4]{Mueg2}, the inclusion functor $\mathcal{Z}^*(\C) \subseteq \mathcal{Z}(\C)$ is an equivalence of categories. Therefore, for every  object $X \in \C$, there exists an object $(Y, \tau_{-,Y}) \in \mathcal{Z}^*(\C)$ and an isomorphism 
\[
f : (X, c_{-,X}) \to (Y, \tau_{-,Y})
\]
in $\mathcal{Z}(\C)$.

Using the naturality of $c_{-,X}$, $c_{-,Y}$, and $\tau_{-,Y}$, it follows that $\tau_{-,Y} = c_{-,Y}$. Since $(Y, \tau_{-,Y}) \in \mathcal{Z}^*(\C)$, the isomorphisms $c_{S,Y}$ are unitary for all $S \in \C$.

By Remark \ref{remark polar}, we can write $f$ as $f = u a$, where $u : X \to Y$ is a unitary isomorphism. Hence, for every $S \in \C$,
\[
c_{S,X} = (  u\otimes \id_S )^*  c_{S,Y} ( \id_S\otimes u).
\]
Since $c_{S,Y}$ is unitary and $u$ is also unitary, it follows that $c_{S,X}$ is unitary for all $S\in \C$. This shows that every braiding in a unitary fusion category is unitary.
\end{proof}

\begin{remark}{~}
\begin{enumerate}
    \item Theorem \ref{braids are unitary} implies that if the $F$-matrices $(F^{ijk}_l)_{n,m} = F^{i,j,k}_{l;n,m}$ are unitary, then the $R$-matrices of the braiding are always unitarily diagonalizable.

    \item A Kac algebra $(H, m, \Delta, *)$ is a semisimple Hopf algebra such that $(H, *)$ is a $C^*$-algebra and the maps $\Delta$ and $\varepsilon$ are $C^*$-algebra maps. Theorem \ref{braids are unitary} implies that every $R$-matrix in a Kac algebra is unitary in the sense that $R^* = R^{-1}$.
\end{enumerate}
\end{remark}

\subsection{Ribbon structures on unitary fusion categories}

If $\C$ is a fusion category, then for every $f\in \Hom_{\C}(X,Y)$
the \textbf{transpose} of $f$, is defined by $$^tf:=(\id_{X^*}\otimes
\text{ev}_Y)(\id_{X^*}\otimes f\otimes \id_{Y^*})(\text{coev}_{X}\otimes \id_{Y^*})\in
\Hom_{\C}(Y^*,X^*).$$

A \textbf{twist} on a braided fusion category $\C$ is a natural
automorphism of the identity functor $\theta \in \Aut(\text{Id}_\C)$, such
that
$$ \theta_{X\otimes Y} = (\theta_X \otimes \theta_Y )c_{Y,X}c_{X,Y}$$
for all $X, Y \in \C$. A twist is called a \textbf{ribbon structure}
if $^t\theta_X = \theta_{X^*}$. A fusion category with a ribbon
structure is called a \textbf{ribbon fusion category}. Each ribbon
structure $\theta$ defines a \textbf{quantum dimension function} by
$\dim_\theta(X)= \text{ev}_X c_{X,X^*}(\theta_X\otimes \id_{X^*})
\text{coev}_X$.

We shall denote by $\Aut_{\otimes}(\text{Id}_\C)_{(+,-)}$ the abelian
group of tensor automorphisms $\gamma$ of the identity  such that
$\gamma_X=\pm \id_X$ for every simple object $X\in \C$.
\begin{proposition}\label{torsor}
Let $\C$ be a  braided fusion category. If the set of ribbon
structures is not empty, it is a torsor under
$\Aut_{\otimes}(\text{Id}_\C)_{(+,-)}$.
\end{proposition}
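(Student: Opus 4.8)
The plan is to embed the set of ribbon structures into the larger set of all twists on $\C$, which carries an obvious torsor structure, and then to single out exactly which part of the structure group preserves the ribbon condition.

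First I would check that the set of twists on $\C$ is a torsor under the full group $\Aut_\otimes(\text{Id}_\C)$ of monoidal automorphisms of the identity functor; it is nonempty because a ribbon structure exists by hypothesis. If $\theta$ is a twist and $\gamma\in\Aut_\otimes(\text{Id}_\C)$, then the pointwise composite $(\gamma\theta)_X:=\gamma_X\theta_X$ is a natural automorphism of $\text{Id}_\C$, and since $\gamma$ is monoidal and $\otimes$ is a bifunctor,
$$(\gamma\theta)_{X\otimes Y}=(\gamma_X\otimes\gamma_Y)(\theta_X\otimes\theta_Y)\,c_{Y,X}c_{X,Y}=\big((\gamma\theta)_X\otimes(\gamma\theta)_Y\big)c_{Y,X}c_{X,Y},$$
so $\gamma\theta$ is again a twist. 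Conversely, given twists $\theta,\theta'$, the quotient $\gamma:=\theta'\theta^{-1}$ is a natural automorphism of $\text{Id}_\C$ with
$$\gamma_{X\otimes Y}=(\theta'_X\otimes\theta'_Y)(\theta_X^{-1}\otimes\theta_Y^{-1})=(\theta'_X\theta_X^{-1})\otimes(\theta'_Y\theta_Y^{-1})=\gamma_X\otimes\gamma_Y,$$
hence monoidal. The action being clearly free, the twists form an $\Aut_\otimes(\text{Id}_\C)$-torsor.

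Next I would rephrase the ribbon condition on simple objects. By Schur's lemma every natural endomorphism of $\text{Id}_\C$ is scalar on each simple object; for a simple $X$ I denote by $\theta_X$ also the corresponding scalar. Since the transpose is $\mathbb{C}$-linear and ${}^t\!\id_X=\id_{X^*}$ by the rigidity axioms, one gets ${}^t\theta_X=\theta_X\,\id_{X^*}$ while $\theta_{X^*}=\theta_{X^*}\,\id_{X^*}$. Therefore, checking on simples (which suffices by additivity of $\theta$ and of the transpose), a twist $\theta$ is a ribbon structure precisely when $\theta_X=\theta_{X^*}$ for every simple $X$. I would also note that any $\gamma\in\Aut_\otimes(\text{Id}_\C)$ satisfies $\gamma_X\gamma_{X^*}=1$ on simples: apply naturality of $\gamma$ to $\text{coev}_X\colon I\to X\otimes X^*$ and use $\gamma_I=\id_I$ together with $\gamma_{X\otimes X^*}=\gamma_X\otimes\gamma_{X^*}$.

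Finally I would put the pieces together. If $\theta,\theta'$ are ribbon structures and $\gamma:=\theta'\theta^{-1}$, then $\theta_X=\theta_{X^*}$ and $\theta'_X=\theta'_{X^*}$ force $\gamma_X=\gamma_{X^*}$, and combined with $\gamma_X\gamma_{X^*}=1$ this gives $\gamma_X^2=1$, i.e.\ $\gamma_X=\pm\id_X$; thus $\gamma\in\Aut_\otimes(\text{Id}_\C)_{(+,-)}$, which is transitivity. Conversely, for $\gamma\in\Aut_\otimes(\text{Id}_\C)_{(+,-)}$ and $\theta$ ribbon, $\gamma_X=\pm1$ and $\gamma_X\gamma_{X^*}=1$ give $\gamma_X=\gamma_{X^*}$, whence $(\gamma\theta)_X=\gamma_X\theta_X=\gamma_{X^*}\theta_{X^*}=(\gamma\theta)_{X^*}$; since $\gamma\theta$ is a twist by the first step, it is a ribbon structure. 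So $\Aut_\otimes(\text{Id}_\C)_{(+,-)}$ acts freely and transitively on the set of ribbon structures, which is therefore a torsor under it. I do not anticipate a genuine obstacle: the only slightly delicate points are the reduction of the twist and ribbon identities to simple objects and the elementary identity ${}^t\!\id_X=\id_{X^*}$, both standard consequences of rigidity.
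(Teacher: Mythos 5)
Your proof is correct, and its skeleton coincides with the paper's: both compare two ribbon structures via their ratio $\gamma$, establish $\gamma(X)=\gamma(X^*)$ and $\gamma(X)\gamma(X^*)=1$ on simples, and conclude $\gamma(X)=\pm 1$. The difference lies in the key lemma used to get $\gamma(X)=\gamma(X^*)$: the paper invokes the duality-invariance of the quantum dimension, $\dim_{\theta'}(X)=\dim_{\theta'}(X^*)$, for a ribbon structure $\theta'$, together with $\dim_{\theta'}(X)=\gamma(X)\dim_{\theta}(X)$, whereas you translate the ribbon axiom ${}^t\theta_X=\theta_{X^*}$ directly into the scalar identity $\theta(X)=\theta(X^*)$ on simple objects using ${}^t\id_X=\id_{X^*}$ and linearity of the transpose. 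Your route is more self-contained (it does not rely on the separately proved fact that quantum dimensions are preserved by duality) and it also supplies two things the paper only asserts: the verification that $\gamma(X^*)=\gamma(X)^{-1}$ (via naturality of $\gamma$ at $\mathrm{coev}_X$ and monoidality) and the converse direction, namely that $\theta\gamma$ is again a ribbon structure when $\gamma_X=\pm\id_X$, which you deduce from $\gamma(X)=\gamma(X^*)$. The preliminary observation that all twists form a torsor under the full group $\Aut_{\otimes}(\mathrm{Id}_{\C})$ is a clean organizing device absent from the paper but entirely compatible with it; the reduction of the ribbon condition to simple objects that you flag is indeed standard in a fusion category.
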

\begin{proof}
Let $\theta$ and $\theta'$ be ribbon structures. It is easy to see that
$\gamma:=\theta^{-1} \theta':\text{Id}_\C\to \text{Id}_\C$ is a tensor
automorphism of the identity. For every simple object, we have
$\theta_X= \theta(X)\id_X$, $\theta_X'= \theta(X)'\id_X$, $\gamma_X=
\gamma(X)\id_X$ for some $\gamma(X), \theta(X), \theta(X)'\in
\mathbb C^*$ and $\theta(X)'=\gamma(X)\theta(X)$. Since $\theta'$ is
a ribbon structure, for every simple object $X\in \C$,
$\dim_{\theta'}(X)=\dim_{\theta'}(X^*)$. On the other hand,
$\dim_{\theta'}(X)=\gamma(X)\dim_\theta(X)$. Therefore
$\gamma(X)=\gamma(X^*)$ and, since $\gamma(X^*)=\gamma(X)^{-1}$ we
conclude that $\gamma$ has order two.

Conversely, if $\gamma$ is an automorphism of the identity such that
$\gamma_X= \pm\id_X$ for every simple object, then, for every ribbon
structure $\theta$, the natural isomorphism $\theta'=\theta\gamma$
is a new ribbon structure.
\end{proof}

If $\C$ is a unitary fusion category a ribbon structure on $\C$ is
called \textbf{unitary ribbon structure} if $\theta_X$ is unitary,
$(\text{coev}_X)^*=\text{ev}_X( c_{X,X^*})(\theta_X\otimes
\id_{X^*})$ and $(\text{ev}_X)^*=(\id_{X^*}\otimes
\theta_X^{-1}) (c_{X^*,X})^{-1} \text{coev}_X$ for all $X\in
\C$. A unitary fusion category with a unitary ribbon structure is
called a \textbf{unitary ribbon fusion category} or \textbf{unitary premodular category}. In a unitary
ribbon fusion category $$\dim_\theta(X)= \text{ev}_X
(c_{X,X^*}) (\theta_X\otimes \id_{X^*}) \text{coev}_X =
(\text{coev}_X )^* \text{coev}_X,$$ therefore, the quantum
dimension of every object is a positive number.

\begin{theorem}\label{unique unitary ribbon}
Every  braided fusion category with a unitary structure admits a
unique unitary ribbon structure.
\end{theorem}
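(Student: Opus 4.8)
The plan is to establish existence and uniqueness separately; existence is the substantial half. For existence I would first invoke the classical fact that a unitary fusion category carries a canonical \emph{unitary spherical} structure: there is a distinguished choice of duality data $\text{ev}_X,\text{coev}_X$ for which the ``mirror'' evaluation and coevaluation agree, up to the canonical pivotal isomorphism, with the adjoints $(\text{ev}_X)^*$ and $(\text{coev}_X)^*$, for which the pivotal structure is spherical, and for which all quantum dimensions are positive real numbers; this is the $C^*$-category dimension theory of Longo--Roberts and M\"uger (see also \cite{TurBook}, \cite{Yama}). By Theorem \ref{braids are unitary} the given braiding $c$ is unitary, i.e.\ $c_{X,Y}^*=c_{X,Y}^{-1}$ for all $X,Y$. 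I would then apply the standard construction that turns a braided pivotal fusion category into a ribbon one: define $\theta_X$ to be the twist assembled from $c_{X,X}$ and the canonical pivotal isomorphism. This is a natural automorphism of the identity functor satisfying $\theta_{X\otimes Y}=(\theta_X\otimes\theta_Y)c_{Y,X}c_{X,Y}$, and, since the pivotal structure is spherical, it satisfies ${}^t\theta_X=\theta_{X^*}$; hence $\theta$ is a ribbon structure on $\C$.

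Next I would check that $\theta$ is a \emph{unitary} ribbon structure in the sense of the definition. On a simple object $X$ we have $\theta_X=\omega_X\id_X$ with $\omega_X\in\mathbb C^*$; taking the adjoint of the expression defining $\theta_X$ and using $(fg)^*=g^*f^*$, $(f\otimes g)^*=f^*\otimes g^*$, the identity $c^*=c^{-1}$, and the adjointness relations built into the canonical duality data, one computes $\theta_X^*=\theta_X^{-1}$, so $\theta_X$ is unitary; by semisimplicity this extends to every object. The remaining two requirements, $(\text{coev}_X)^*=\text{ev}_X\, c_{X,X^*}(\theta_X\otimes\id_{X^*})$ and $(\text{ev}_X)^*=(\id_{X^*}\otimes\theta_X^{-1})c_{X^*,X}^{-1}\text{coev}_X$, amount to rewriting the $*$-compatibility of the canonical spherical structure through the braiding, once more via $c^*=c^{-1}$. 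This diagrammatic identity is the one genuinely technical step and is where I expect the bulk of the work to lie. (Alternatively one could produce $\theta$ by restricting, along the unitary braided tensor embedding $\C\hookrightarrow\mathcal{Z}(\C)=\mathcal{Z}^*(\C)$ supplied by Theorem \ref{braids are unitary} and Proposition \ref{centro igual a centro unitario}, the canonical unitary ribbon structure of the unitary modular category $\mathcal{Z}^*(\C)$.)

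Once existence is in hand, the set of ribbon structures of $\C$ is non-empty, so Proposition \ref{torsor} applies and any two ribbon structures $\theta,\theta'$ differ by $\gamma=\theta^{-1}\theta'\in\Aut_{\otimes}(\text{Id}_\C)_{(+,-)}$, with $\gamma_X=\gamma(X)\id_X$, $\gamma(X)\in\{+1,-1\}$, for simple $X$, and $\dim_{\theta'}(X)=\gamma(X)\dim_\theta(X)$. If both $\theta$ and $\theta'$ are unitary ribbon structures, then, as recorded just before the statement, $\dim_\theta(X)=(\text{coev}_X)^*\text{coev}_X>0$ and likewise $\dim_{\theta'}(X)>0$ for every simple $X$; hence $\gamma(X)=+1$ for all simple $X$, which forces $\gamma$ to be the identity and therefore $\theta=\theta'$. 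So uniqueness is immediate once one knows existence together with the positivity of quantum dimensions in a unitary ribbon category. In summary, the only real obstacle is in the existence half: setting up the canonical unitary (co)evaluations and verifying that the twist built from them and the (unitary) braiding satisfies all three unitarity conditions of the definition.
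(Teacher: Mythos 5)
Your proposal is correct and follows essentially the same route as the paper: existence comes from the canonical unitary ribbon structure on a braided $C^*$-tensor category (the paper simply cites M\"uger's Proposition 2.4 for this, where you sketch its content via the Longo--Roberts/M\"uger dimension theory and Theorem \ref{braids are unitary}), and uniqueness is the identical argument via Proposition \ref{torsor} together with positivity of quantum dimensions in a unitary ribbon structure.
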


\begin{proof}
By \cite[Proposition 2.4]{Mueg} every braided unitary fusion
category admits a canonical unitary ribbon structure. Let $\theta_c$
the canonical ribbon structure associated to $c$. By Proposition
\ref{torsor}, if $\theta'$ is another unitary ribbon structure, then
there is $\gamma \in \Aut_{\otimes}(\text{Id}_\C)_{(+,-)}$ such that
$\theta'= \theta_c\gamma$. If $\gamma$ is not the identity there is
a simple object $X\in \C$ such that $\gamma_X=-\id_X$, then
$\dim_{\theta'}(X)=-\dim_{\theta_c}(X)<0$, but the quantum dimension
of every object of any unitary ribbon structure is positive.
Therefore $\gamma$ is the identity and $\theta_c$ is unique.
\end{proof}

\begin{remark}
It follows from Theorem \ref{unique unitary ribbon} that if a
unitary braided fusion category is non-degenerate (see \cite{DGNO}
for a definition), then it admits a unique unitary modular
structure.
\end{remark}

\makeatletter

 \def\@biblabel#1{#1.}

\makeatother

\end{document}